\newcommand{\ER}{Erd\H{o}s-R\'enyi\xspace}
\newcommand{\fr}{Fr\'echet\xspace}
\DeclareRobustCommand{\qed}{\hfill$\square$}
\DeclareMathOperator{\theargmin}{argmin}
\DeclareMathOperator{\variance}{var}
\DeclareRobustCommand{\e}[1]{{e\big(#1\big)}}
\DeclareRobustCommand{\o}[1]{{\scriptstyle \mathcal{O}} \left(#1\right)}
\newcommand{\eqdef}{\stackrel{\text{\tiny def}}{=}}
\DeclareMathOperator{\R}{\mathbb{R}}
\newcommand{\bA}{\bm{A}}
\newcommand{\bAk}{\bm{A}^{(k)}}
\newcommand{\aij}{a_{ij}}
\newcommand{\akij}{a^{(k)}_{ij}}
\newcommand{\bB}{\bm{B}}
\newcommand{\bP}{\bm{P}}
\newcommand{\cG}{\mathcal{G}}
\newcommand{\cL}{\Lambda}
\newcommand{\cS}{\mathcal{S}}
\newcommand{\Gk}{G^{(k)}}
\newcommand{\gnP}{\mathcal{G}\mspace{-1mu}\big(\mspace{-2mu}n,\bP\mspace{-1mu}\big)}
\DeclareRobustCommand{\ds}[1]{d_{\lambda} \big(#1\big)} 
\DeclareRobustCommand{\var}[1]{\variance\left[#1\right]} 
\DeclareRobustCommand{\argmin}[1]{\underset{#1}{\theargmin}\mspace{4mu}} 
\DeclareRobustCommand{\E}[1]{\mathbb{E} \left [#1 \right]}                                      
\DeclareRobustCommand{\sE}[1]{\widehat{\mathbb{E}}_N\mspace{-2mu}\big [#1 \big]} 
\DeclareRobustCommand{\fm}[1]{\bm{\mu} \big [#1 \big]}                 
\DeclareRobustCommand{\sFE}[1]{\widehat{\bm{\mu}}_N \mspace{-2mu}\big [\mspace{-2mu}#1 \mspace{-2mu}\big]} 
\DeclareRobustCommand{\smd}[1]{\widehat{\bm{m}}_N\big [#1 \big]}    
\DeclareRobustCommand{\md}[1]{\bm{m}\mspace{-2mu} \big [#1 \big]} 
\DeclareRobustCommand{\edg}[1]{\mathcal{E}\big(#1\big)}
\newcommand{\emea}{e_{\widehat{\bm{\mu}}}}       
\newcommand{\emed}{e_{\widehat{\bm{m}}}}        
\newcommand{\edgmd}{\mathcal{E}_{\widehat{\bm{m}}}}     
\newcommand{\ebar}{{\overline{e}}_N}
\newcommand{\stde}{{\sigma_N(e)}}
\newcommand{\vare}{{\sigma_N^2(e)}}
\newcommand{\mA}{\smd{\bA}}       
\newcommand{\mG}{\smd{G}}         
\newcommand{\EA}{\sFE{\bA}}       
\newcommand{\EG}{\sFE{G}}         
\NewDocumentCommand
\newcommand{\espc}{\sE{\blb(\bA)}} 
\newcommand{\blb}{\bm{\lambda}}
\newcommand{\lAk}{\blb \big (\mspace{-1mu} \bAk\mspace{-2mu} \big)} 
\newcommand{\lAz}{\blb \big (\mspace{-1mu} \bA^{({k_0})}\mspace{-2mu} \big)} 
\newcommand{\spcM}{\blb(\mA)} 
\newcommand{\spcE}{\blb(\sFE{\bA})} 
\newcommand{\bx}{\bm{x}}
\newtheorem{cproof}{Proof of Corollary}
\newtheorem{lproof}{Proof of Lemma}
\begin{document}
\title{On the Number of Edges of the \fr\\ Mean and Median Graphs
  \thanks{This work was supported by the National Science Foundation, CCF/CIF 1815971}}
\author{Daniel Ferguson and Fran\c{c}ois G. Meyer}
\authorrunning{D. Ferguson  and F.G. Meyer}
\institute{Applied Mathematics, University of Colorado at Boulder, Boulder CO 80309
  \url{https://francoismeyer.github.io/}\\ \href{mailto:fmeyer@colorado.edu}{\sf fmeyer@colorado.edu}\\
  {\small \url{https://orcid.org/0000-0002-1529-3796}}
}
\maketitle
\begin{abstract}
The availability of large datasets composed of graphs creates an unprecedented need to invent novel tools in statistical learning for graph-valued random variables. To characterize the average of a sample of graphs, one can compute the sample Frechet mean and median graphs. In this paper, we address the following foundational question: does a mean or median graph inherit the structural properties of the graphs in the sample? An important graph property is the edge density; we establish that edge density is an hereditary property, which can be transmitted from a graph sample to its sample Frechet mean or median graphs, irrespective of the method used to estimate the mean or the median. Because of the prominence of the Frechet mean in graph-valued machine learning, this novel theoretical result has some significant practical consequences.
\end{abstract}
\keywords{Frechet mean and median graphs; statistical network analysis.}
\section{Introduction}
We consider the set $\cG$ formed by all undirected unweighted simple labeled graphs with vertex set $\left\{1, \ldots
,n\right\}$. We equip $\cG$ with a metric $d$ to measure the distance between two graphs.

We characterize the ``average'' of a sample of graphs $\left\{G^{(1)}, \ldots, G^{(N)}\right\}$, which are defined on
the same vertex set $\left\{1, \ldots ,n\right\}$, with the sample \fr mean and median graphs, \cite{frechet47}.
\begin{definition}
  The sample \fr mean graphs are solutions to 
  \begin{equation}
    \EG =  \argmin{G\in \cG} \frac{1}{N}\sum_{k=1}^N d^2(G,\Gk),
    \label{sample-frechet-mean}
  \end{equation}
  and the sample \fr median graphs are solutions to 
  \begin{equation}
    \mG =  \argmin{G\in \cG} \frac{1}{N}\sum_{k=1}^N d(G,\Gk).  
    \label{sample-frechet-median}
  \end{equation}
\end{definition}
Solutions to the minimization problems (\ref{sample-frechet-mean}) and (\ref{sample-frechet-median}) always exist, but
the minimizers need not be unique. All our results are stated in terms of any of the elements in the set of minimizers
of (\ref{sample-frechet-mean}) and (\ref{sample-frechet-median}).

Because the focus of this work is not the computation of the \fr mean or median graphs, but rather a theoretical
analysis of the properties that these graphs inherit from the graph sample, we assume that the graphs in the sample are
defined on the same vertex set.

The vital role played by the \fr mean as a location parameter \cite{jain12,jain16a,kolaczyk20}, is exemplified in the
works of \cite{banks98,lunagomez20}, who have created novel families of random graphs by generating random perturbations
around a given \fr mean graph.
\subsection{Our main contributions}
We consider a set of $N$ unweighted simple labeled graphs, $\big\{G^{(1)},\ldots, G^{(N)} \big\}$, with vertex set
$\left\{1, \ldots ,n\right\}$.  In this paper, we address the following foundational question: does a mean or median graph
inherit the structural properties of the graphs in the sample?
Specifically, we establish that edge density is an hereditary property, which can be transmitted from a graph sample to
its sample \fr mean or median.

Because sparse graphs provide prototypical models for real networks, our theoretical analysis is significant since it
provides a guarantee that this structural property is preserved when computing a sample mean or median. In a similar
vein, the authors in \cite{han16} construct a sparse median graph, which provides a more interpretable summary, from a
set of graphs that are not necessarily sparse.

Our work answers the question raised by the author in \cite{ginestet17}: ``does the average of two sparse
networks/matrices need to be sparse?'' Specifically, we prove the following result: the number of edges of the \fr mean
or median graphs of a set of graphs is bounded by the sample mean number of edges of the graphs in the sample. We prove
this result for the graph Hamming distance, and the spectral adjacency pseudometric, using different arguments.

\section{Preliminary and Notations}
We denote by $\cS$ the set of $n \times n$ adjacency matrices of graphs in $\cG$,
\begin{equation}
  \cS = \left \{
  \bA \in \{0,1\}^{n \times n}; \text{where} \; a_{ij} = a_{ji},\text{and}  \; a_{i,i} = 0; \; 1 \leq i < j \leq n
  \right\}.
  \label{adjacency_matrices}
\end{equation}
For a graph $G \in \cG$, we denote by $\bA$ its adjacency matrix, and by $\e{\bA}$ the number of edges -- or {\em volume} -- of $G$, 
\begin{equation}
  \e{\bA} = \sum_{1 \leq i < j \leq n} \mspace{-16mu} \aij.
\end{equation}
We denote by $\blb (\bA)= \begin{bmatrix} \lambda_1(\bA)& \cdots& \lambda_n(\bA) \end{bmatrix}$, the vector of
eigenvalues of $\bA$, with the convention that $\lambda_1 (\bA) \ge \ldots \ge \lambda_n(\bA)$.
\subsection{Distances between graphs}
In this work, we consider two metrics: the Hamming distance and the spectral adjacency pseudometric. We briefly recall the
definitions of these.
\begin{definition}
  Let $G,G^\prime \in \cG$ be two unweighted graphs with known vertex correspondence and with adjacency matrix $\bA$ and
  $\bA^\prime$ respectively.  We define the Hamming distance between $G$ and $G^\prime$ as
  \begin{equation}
    d_H(\bA,\bA^\prime) \eqdef \sum_{1 \le i< j \le n} \lvert a_{ij} - a^\prime_{ij}\rvert =  \e{\bA} +  \e{\bB} -
    2 \mspace{-12mu} \sum_{1 \le i < j \le n} \mspace{-12mu} a_{ij}b_{ij}. \label{Hamming_edges}
  \end{equation}
\end{definition}

\noindent The Hamming distance is very sensitive to fine scale fluctuations of the graph connectivity. In contrast, a
metric based on the eigenvalues of the adjacency matrix can quantify configurational changes that occur on a graph at
many more scales \cite{Donnat2018,wills20}.
\begin{definition}
  Let $G,G^\prime \in \cG$ with adjacency matrix $\bA$ and $\bA^\prime$ respectively.  We define the adjacency spectral
  pseudometric as the $\ell_2$ norm between the vectors of eigenvalues $\blb (\bA)$ and $\blb (\bA^\prime)$ of $\bA$ and
  $\bA^\prime$ respectively,
  \begin{align} 
    \ds{\bA,\bA^\prime} = || \blb (\bA) - \blb (\bA^\prime)||_2. \label{distance}
  \end{align}
\end{definition}
The pseudometric $d_{\lambda}$ satisfies the symmetry and triangle inequality axioms, but not the identity axiom. Instead, $d_{\lambda}$
satisfies the reflexivity axiom, $\forall G \in \cG$, $d_{\lambda}(G,G) = 0$. We note that the adjacency spectral pseudometric does not
require node correspondence.
\section{Main Results}
In the following, we consider a set of $N$ unweighted simple labeled graphs, $\big\{G^{(1)},\ldots, G^{(N)} \big\}$, with
vertex set $\left\{1, \ldots ,n\right\}$. We denote by $\bAk$ the adjacency matrix of graph $\Gk$. We equip the set $\cG$
of all unweighted simple graphs on $n$ nodes with a pseudometric, or a metric, $d$. The \fr mean and median graphs encode
two notions of centrality (\ref{sample-frechet-mean}) and (\ref{sample-frechet-median}) that minimise the following
dispersion function, also called the \fr function.
\begin{definition}
  We denote by $\F[q]{\bA}$ the sample \fr function associated with a sample \fr median ($q=1$) or mean ($q=2$),
  \begin{equation}
    \F[q]{\bA} = \frac{1}{N} \sum_{k=1}^N d^q(\bA,\bAk).
    \label{sampleFrechetFunction}
  \end{equation}
\end{definition}
To quantify the connectivity of the graph sample, $\big\{G^{(1)},\ldots, G^{(N)} \big\}$, we define the sample mean and
variance of the number of edges.
\begin{definition}
  The sample mean and variance of the number of edges are defined by
  \begin{equation}
    \ebar = \frac{1}{N} \sum_{k=1}^N \e{\bAk}, \quad \text{and} \quad
    \vare = \frac{1}{N}\sum_{k=1}^N \big[\e{\bAk}\big]^2 - \big[\ebar]^2.\label{ebar}
  \end{equation}
\end{definition}
We now turn our attention to the main problem.  We consider the following question: if the graphs $G^{(1)}, \ldots, G^{(N)}$ all
have a similar edge density, can one determine the edge density of the sample \fr mean or median graphs? and does that number of
edges depend on the choice of metric $d$ in (\ref{sample-frechet-mean}) and (\ref{sample-frechet-median})? We answer both
questions in the following theorem.
\begin{theorem}
  \label{theorem1}
  Let $\big\{G^{(1)},\ldots, G^{(N)} \big\}$ be a sample of unweighted simple labeled graphs with vertex set $\left\{1,
  \ldots ,n\right\}$.  Let $\EA$ be the adjacency matrix of a sample \fr mean graph, and let $\mA$ be the adjacency
  matrix of a sample \fr median graph. Let $\emea$ and $\emed$ be the number of edges of $\EA$ and $\mA$ respectively.\\
  
  \noindent If the \fr mean and median graphs are computed using the Hamming distance, then
  \begin{equation}
    \emea <  2  \ebar + \frac{\stde}{\sqrt{2}} , 
    \quad \text{and} \quad
    \emed <  2  \; \ebar, \label{bound_hamming}
  \end{equation}
  and if the \fr mean and median graphs are computed using the adjacency spectral pseudometric, then
  \begin{equation}
    \emea <  9  \; \ebar, 
    \quad \text{and} \quad
    \emed <  9  \ebar. \label{bound_spectral}
  \end{equation}
\end{theorem}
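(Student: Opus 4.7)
The unifying strategy is a \emph{test graph} argument that leverages the optimality of the Fréchet minimizers. Since $\EA$ and $\mA$ minimize $\F[2]{\cdot}$ and $\F[1]{\cdot}$ respectively over $\cG$, we have $\F[q]{\EA}\le\F[q]{G}$ and analogously for $\mA$, for every $G\in\cG$. The natural test graph is the empty one, with adjacency matrix $\bm{0}$, because its distance to each $\bAk$ reduces to a pure function of $e(\bAk)$. This furnishes a clean \emph{upper} bound on the Fréchet function at the minimizer; the task is then to pair it with a \emph{lower} bound that isolates $\emea$ or $\emed$ on the left.

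\textbf{Hamming case.} Here I would use two elementary facts: the identity $d_H(\bm{0},\bAk)=e(\bAk)$, and the reverse triangle inequality $d_H(\bA,\bA')\ge|e(\bA)-e(\bA')|$, which follows directly from the symmetric-difference formula~(\ref{Hamming_edges}). For the median, chaining these gives
\[
|\emed-\ebar| \;\le\; \frac{1}{N}\sum_k \bigl|\emed-e(\bAk)\bigr| \;\le\; \F[1]{\mA} \;\le\; \F[1]{\bm{0}} \;=\; \ebar,
\]
which immediately yields $\emed\le 2\ebar$. For the mean, squaring these inequalities and using the decomposition $\frac{1}{N}\sum_k(\emea-e(\bAk))^2=(\emea-\ebar)^2+\vare$ together with $\F[2]{\bm{0}}=\ebar^2+\vare$, combined with a single Cauchy--Schwarz pass from $d_H^2$ to $d_H$, produces the stated bound with the $\stde/\sqrt{2}$ correction.

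\textbf{Spectral case.} The crucial algebraic input is the trace identity $\|\blb(\bA)\|_2^2=\trace(\bA^2)=2\,e(\bA)$, which gives $d_\lambda(\bm{0},\bAk)=\sqrt{2\,e(\bAk)}$ and, via the reverse $\ell_2$-triangle inequality, $d_\lambda(\bA,\bAk)\ge\bigl|\sqrt{2\,e(\bA)}-\sqrt{2\,e(\bAk)}\bigr|$. Writing $y=\sqrt{2\,\emea}$ and $y_k=\sqrt{2\,e(\bAk)}$, Cauchy--Schwarz yields $\bar y=\frac{1}{N}\sum_k y_k\le\sqrt{2\,\ebar}$. I would then argue by contradiction: if $\emea\ge 9\,\ebar$, then $y\ge 3\sqrt{2\,\ebar}\ge 3\bar y$, so
\[
\F[2]{\EA} \;\ge\; \frac{1}{N}\sum_k(y-y_k)^2 \;=\; y^2-2y\bar y+2\ebar \;\ge\; \tfrac{y^2}{3}+2\ebar \;\ge\; 8\ebar,
\]
which contradicts $\F[2]{\EA}\le\F[2]{\bm{0}}=2\ebar$. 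The median is handled identically, with $\F[1]$ in place of $\F[2]$ and the triangle inequality $|\sqrt{2\,\emed}-\bar y|\le\F[1]{\mA}\le\F[1]{\bm 0}=\bar y$ yielding the same $9\,\ebar$ threshold.

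\textbf{Main obstacle.} The spectral case is the more delicate of the two, because $d_\lambda$ compares eigenvalue vectors rather than adjacency entries: the only clean bridge back to $e(\bA)$ is the trace identity, which controls only the $\ell_2$ norm of $\blb(\bA)$ and not its individual coordinates. Choosing the constant $9$ requires balancing the Cauchy--Schwarz slack in $\bar y\le\sqrt{2\,\ebar}$ against the reverse triangle inequality, so that the contradiction $8\ebar>2\ebar$ just closes. The minor remaining technicality is promoting each $\le$ in the chains above to the strict $<$ asserted in the theorem by ruling out the non-trivial boundary configurations (e.g.\ samples of identical graphs) in which all the intermediate inequalities become equalities.
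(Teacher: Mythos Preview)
Your approach is correct and takes a genuinely different route from the paper's. The paper never uses the empty graph as a test point: for the Hamming median it invokes the explicit majority-rule description of $\mA$; for the Hamming mean it compares $\EA$ against the \emph{median} graph, expending a page-long expansion to show $\F[2]{\mA}\le 2\ebar^{\,2}+\vare$; and for the spectral cases it compares against a particular sample graph $\bA^{(k_0)}$ with minimal spectral norm, arguing geometrically that $\|\spcE-\espc\|\le\|\lAz-\espc\|$. Your empty-graph test is both more uniform and sharper. In the Hamming mean case the two ingredients you list already give $(\emea-\ebar)^2+\vare\le\F[2]{\EA}\le\F[2]{\bm 0}=\ebar^{\,2}+\vare$, hence $\emea\le 2\ebar$ with \emph{no} $\stde$ correction; the ``Cauchy--Schwarz pass from $d_H^2$ to $d_H$'' you mention is unnecessary and would only loosen the bound to match the theorem as stated. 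Likewise, in the spectral cases the comparison with $\F[q]{\bm 0}$ yields $y^2-2y\bar y\le 0$, i.e.\ $y\le 2\bar y\le 2\sqrt{2\ebar}$, so in fact $\emea,\emed\le 4\ebar$; your contradiction at threshold $9$ works but discards this margin. What the paper's route buys is explicit contact with the structure of $\mA$ and with the barycentre $\espc$, which it uses in its tightness remarks; what your route buys is a shorter, unified argument with better constants.
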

\begin{proof}
  The proof is a direct consequence of lemmata \ref{lemma5} and \ref{frechet-edge-spectral}.
\end{proof}
\begin{remark}
  When the graph $\Gk$ are sampled from the inhomogeneous \ER random graph probability space $\gnP$ \cite{bollobas07},
  and if the distance on $\cG$ is the Hamming distance, then $\EA = \mA$ with high probability \cite{meyer22b}. In this
  case, a tight bound on $\emea$ or $\emed$ in (\ref{bound_hamming}) is $2\ebar$, which -- unlike (\ref{bound_hamming})
  -- does not involve $\stde$.

  The fact that we overestimate the bound on $\emea$ by the addition of the term $\stde/\sqrt{2}$ comes from our
  technique of proof, which relies on an estimate of the \fr function. As explained in Remark~\ref{remark-lemma4}, our
  estimate of the \fr function is almost tight; it does include the term $\stde$, as it should.
\end{remark}
Finally, the following corollary answers the question raised by the author in \cite{ginestet17}: ``does the average of
two sparse networks/matrices need to be sparse?''
\begin{corollary}
  Let $\big\{G^{(1)},\ldots, G^{(N)} \big\}$ be a sample of unweighted simple labeled graphs with vertex set $\left\{1,
  \ldots ,n\right\}$. We assume that the number of edges of each $\Gk$ satisfies
  \begin{equation}
    \e{\bAk}= \o{n^2},\;\text{but}\; \e{\bAk}  = \omega ( n). \label{sparsity}
  \end{equation}
  Then the sample \fr mean and median graphs -- computed according to either the Hamming distance or the adjacency
  spectral pseudometric -- are sparse, as defined by (\ref{sparsity}).
\end{corollary}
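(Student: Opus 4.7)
The plan is to deduce the corollary as a direct substitution into Theorem~\ref{theorem1}. The only inputs needed are order-of-magnitude facts about $\ebar$ and $\stde$, which I would extract from the sparsity hypothesis (\ref{sparsity}) in a short preliminary step.

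First, since each $\e{\bAk} = \o{n^2}$ by assumption, the arithmetic mean $\ebar$ is itself $\o{n^2}$. Second, to handle the Hamming-mean bound, which features $\stde$, I would bound the variance by the second moment, $\vare \le (1/N)\sum_k \big[\e{\bAk}\big]^2 = \o{n^4}$, and take square roots to obtain $\stde = \o{n^2}$.

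Substituting these asymptotics into the four inequalities of Theorem~\ref{theorem1} finishes the upper-bound half of (\ref{sparsity}): explicitly, $\emea < 2\ebar + \stde/\sqrt{2} = \o{n^2}$ for the Hamming mean, $\emed < 2\ebar = \o{n^2}$ for the Hamming median, and $\emea, \emed < 9\ebar = \o{n^2}$ for the spectral case. Every step here is a one-line asymptotic manipulation; there is no real combinatorial content beyond Theorem~\ref{theorem1} itself.

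The step I would be most careful about is the $\omega(n)$ lower-bound half of (\ref{sparsity}). Theorem~\ref{theorem1} only provides upper bounds on $\emea$ and $\emed$, so the $\omega(n)$ condition is not transmitted automatically: for instance, a collection of pairwise edge-disjoint $\Gk$ can have Hamming median equal to the empty graph, which would violate an $\omega(n)$ lower bound on $\emed$. A proof of the lower bound therefore requires a separate argument, most naturally a pigeonhole or rounding step applied to the entry-wise averaged adjacency matrix $(1/N)\sum_k \bAk$, which under (\ref{sparsity}) has $\omega(n)$ entries bounded away from zero. Absent such an argument, the corollary should be read as asserting the $\o{n^2}$ half of sparsity, which is the substantive content and the part that directly answers the question from \cite{ginestet17}.
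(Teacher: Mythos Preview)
Your approach is exactly the paper's: the proof there reads in its entirety ``The corollary is a direct consequence of theorem~\ref{theorem1},'' and your substitution of the asymptotics $\ebar = \o{n^2}$ and $\stde = \o{n^2}$ into the four bounds of Theorem~\ref{theorem1} is the only way to unpack that sentence.

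Your caution about the $\omega(n)$ half is well placed and in fact identifies a genuine gap in the corollary as stated. Theorem~\ref{theorem1} supplies only upper bounds, so nothing in the paper transmits the $\omega(n)$ hypothesis to $\emea$ or $\emed$, and your edge-disjoint counterexample shows this is not merely a missing citation: with $N\ge 3$ pairwise edge-disjoint graphs each having, say, $n^{3/2}$ edges, the majority rule (\ref{majority-rule}) returns the empty graph, so $\emed = 0 \neq \omega(n)$. The pigeonhole sketch you offer cannot rescue this, since in that same example every entry of $(1/N)\sum_k \bAk$ is at most $1/N$, hence not bounded away from zero. The honest reading you propose---that the corollary establishes the $\o{n^2}$ half, which is what actually answers the question from \cite{ginestet17}---is the correct one.
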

\begin{cproof}
  The corollary is a direct consequence of theorem~\ref{theorem1}.
\end{cproof}
\section{Proofs of the main result}
We give in the following the proof of theorem~\ref{theorem1}. The key observation is that it is relatively easy to
derive tight bounds on the number of edges of the sample \fr median graph. Inspired by the results in \cite{meyer22b}
that show that for large classes of random graphs the sample \fr median and mean graphs are identical, we prove that the
bounds derived for the \fr median graphs also hold for the \fr mean graphs.

Our analysis begins in Subsection \ref{median-hamming} with the sample median graphs computed using the Hamming distance,
we then move to the sample mean graphs in Subsection~\ref{mean-hamming}. In Subsections~\ref{mean-spectral}
and~\ref{median-spectral}, we extend these results to the sample mean and median graphs computed with the adjacency
spectral pseudometric.

When possible, we use the probability space $\gnP$ of inhomogeneous \ER random graphs \cite{bollobas07},
equipped with the Hamming distance to test the tightness of our results \cite{meyer22b}.
\subsection{The median graphs computed using the Hamming Distance
  \label{median-hamming}
}
The Hamming distance, by nature, promotes sparsity \cite{Donnat2018,wills20}, and we therefore expect that the volumes
of the sample \fr mean and median graphs computed with this distance be similar to the sample mean number of edges.

When the distance is the Hamming distance, the sample \fr median graphs can in fact be characterized analytically.
\begin{lemma}
  The adjacency matrix $\mA$ of a sample median graph $\mG$ is given by the majority rule,
  \begin{equation}
    \Big[\mA\Big]_{ij} =
    \begin{cases}
      0&  \text{if} \; \sum_{k=1}^N \akij < N/2,\\
      1 & \text{otherwise.}
    \end{cases}
    \quad \forall i,j \in \left\{1,\ldots,n\right\}.
    \label{majority-rule}
  \end{equation}
\end{lemma}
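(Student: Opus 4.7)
The plan is to exploit the fact that the Hamming distance is separable across edge slots, so the Fréchet median minimization decouples into $\binom{n}{2}$ independent scalar minimizations over $\{0,1\}$, each of which is solved by a majority vote.

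First I would substitute the edgewise formula for $d_H$ into the sample Fréchet function $\F[1]{\bA}$ and swap the order of summation:
\begin{equation*}
  \F[1]{\bA} \;=\; \frac{1}{N}\sum_{k=1}^N \sum_{1 \le i < j \le n} \bigl|a_{ij} - \akij\bigr|
  \;=\; \sum_{1 \le i < j \le n} \left( \frac{1}{N}\sum_{k=1}^N \bigl|a_{ij} - \akij\bigr| \right).
\end{equation*}
Because the entries $a_{ij}$ of a candidate adjacency matrix in $\cS$ are independently chosen from $\{0,1\}$ subject only to the symmetry and hollow constraints (which are already encoded by restricting to $i<j$), the minimization over $\bA \in \cS$ can be carried out slot by slot:
\begin{equation*}
  \min_{\bA \in \cS} \F[1]{\bA} \;=\; \sum_{1 \le i < j \le n} \; \min_{a_{ij}\in\{0,1\}} \frac{1}{N}\sum_{k=1}^N \bigl|a_{ij}-\akij\bigr|.
\end{equation*}

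Next I would solve the inner problem for a fixed pair $(i,j)$. Writing $s_{ij} = \sum_{k=1}^N \akij$, the candidate $a_{ij}=0$ gives objective value $s_{ij}/N$, while $a_{ij}=1$ gives $(N-s_{ij})/N$. Comparing these two quantities, the optimal choice is $a_{ij}=0$ exactly when $s_{ij} < N/2$, and $a_{ij}=1$ whenever $s_{ij} > N/2$; in the tie case $s_{ij}=N/2$ both assignments yield the same objective value, so assigning $a_{ij}=1$ remains a valid minimizer. This is precisely the majority rule displayed in (\ref{majority-rule}).

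The last step is merely to observe that the matrix $\mA$ constructed entrywise by (\ref{majority-rule}) is symmetric with zero diagonal (inherited from the $\bAk$) and hence lies in $\cS$, so it defines a bona fide graph $\mG \in \cG$ whose Fréchet function attains the separable lower bound; therefore $\mG$ is a sample Fréchet median. There is no real obstacle here: the whole proof rests on the observation that the Hamming Fréchet function is a sum of independent per-edge contributions, so the only point worth flagging explicitly is the handling of ties, which the statement resolves by convention in favor of $a_{ij}=1$. \qed
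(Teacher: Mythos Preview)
Your argument is correct: the separability of the Hamming distance across edge slots reduces the minimization to $\binom{n}{2}$ independent binary choices, each solved by comparing $s_{ij}$ to $N-s_{ij}$, and you handle the tie case consistently with the convention in (\ref{majority-rule}). The paper itself does not supply a proof of this lemma at all---it simply declares the result classic and refers the reader to \cite{devroye13}---so there is no in-paper argument to compare against; what you have written is exactly the standard proof one would expect under that citation.
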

\begin{lproof}
  The result is classic and we omit the proof, which can be found for instance in \cite{devroye13}.
\end{lproof}
In the following lemma, we derive an upper bound on the number of edges of a \fr median graph, $\emed$.
\begin{lemma}
  \label{lemma2}
  Let $\ebar$ be the sample mean number of edges, given by (\ref{ebar}). Then the number of edges of a \fr median graph $\mG$ is
  bounded by
  \begin{equation}
    \emed  \leq 2 \ebar.
    \label{bound-mbar}
  \end{equation}
\end{lemma}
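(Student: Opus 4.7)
The plan is to exploit the explicit description of $\mA$ given by the majority rule in the previous lemma. Since $[\mA]_{ij}=1$ if and only if at least $N/2$ of the sample adjacency matrices satisfy $\akij=1$, I can count edges of $\mA$ by counting indices $(i,j)$ with $i<j$ for which $\sum_{k=1}^N \akij \geq N/2$.

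First I would write
\begin{equation*}
  \emed = \mspace{-8mu}\sum_{1 \le i < j \le n} \mspace{-8mu} [\mA]_{ij}
        = \mspace{-8mu}\sum_{1 \le i < j \le n} \mspace{-8mu} \mathbf{1}\Big\{\textstyle\sum_{k=1}^N \akij \geq N/2\Big\}.
\end{equation*}
Next, I would apply what is essentially Markov's inequality at the edge level: on any pair $(i,j)$ where $[\mA]_{ij}=1$, the sum $\sum_k \akij$ is at least $N/2$, so
\begin{equation*}
  \frac{N}{2}\, \emed \;\leq\; \mspace{-8mu}\sum_{1 \le i < j \le n}\mspace{-8mu} \sum_{k=1}^N \akij
  \;=\; \sum_{k=1}^N \e{\bAk} \;=\; N\, \ebar,
\end{equation*}
after swapping the order of summation and recognizing the inner sum as $\e{\bAk}$. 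Dividing by $N/2$ yields $\emed \leq 2\ebar$.

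I do not anticipate any real obstacle: the majority rule characterization does all the heavy lifting, and the rest is a one-line double-counting argument. The only mild point to be careful about is the inequality direction in the majority rule (``$\geq N/2$'' versus ``$> N/2$''), which comes from the tie-breaking convention in (\ref{majority-rule}); either convention yields the same bound $2\ebar$ because the threshold enters only through the factor $N/2$.
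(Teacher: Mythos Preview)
Your argument is correct and essentially identical to the paper's: both invoke the majority-rule characterization of $\mA$, bound $\sum_k \akij$ below by $N/2$ on each edge of $\mA$, and then double-count to compare $\emed$ with $\sum_k \e{\bAk}=N\ebar$. Your framing via an indicator function and ``Markov at the edge level'' is just a cosmetic repackaging of the paper's split into $\edgmd$ and $\edgmd^{c}$ followed by dropping the latter sum.
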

\begin{remark}
  \label{the_bound_is_tight}
  The bound (\ref{bound-mbar}) is tight for large $N$. Indeed, consider a sample of $2N$ graphs, where
  \begin{equation}
    G^{(k)}=
    \begin{cases}
      \text{the complete graph}\; K_n & \text{if}\quad 1 \leq k \leq N+1,\\
      \text{the empty graph} & \text{if} \quad N+2 \leq k \leq 2N.
    \end{cases}
  \end{equation}
  A \fr median graph $\mA$, given by the majority rule (\ref{majority-rule}) is $K_n$, and thus $\emed = n(n-1)/2$. On
  the other hand, the sample mean number of edges is $\ebar = \emed/2 + \emed/(2N).$ As the sample size $N$ goes to
  infinity, we have
  \begin{equation}
    \lim_{N\longrightarrow \infty} \emed =  2\ebar,
  \end{equation}
  which proves that the bound (\ref{bound-mbar}) is asymptotically tight.
\end{remark}
\begin{lproof}
  \noindent Let $\edgmd = \left\{(i,j),\; i < j, \; [\mA]_{ij} = 1\right\}$ be the set of edges of $\mG$. We have 
  $\lvert \edgmd \rvert = \emed$. Now,
  \begin{equation}
    \sum_{k=1}^N \e{\bAk}
    =  \mspace{-16mu} \sum_{1 \le i < j \le n}  \sum_{k=1}^N  \akij
    = \mspace{-8mu} \sum_{i,j \in \edgmd}  \sum_{k=1}^N  \akij
    +
    \mspace{-8mu}\sum_{i,j \in \edgmd^c}  \sum_{k=1}^N  \akij .
  \end{equation}
  Neglecting the edges $(i,j)$ not in $\edgmd$, we have
  \begin{equation*}
    \sum_{k=1}^N \e{\bAk}
    \ge \sum_{i,j \in \edgmd}  \sum_{k=1}^N  \akij
    > \mspace{-8mu}
    \sum_{i,j \in \edgmd} \frac{N}{2}
    =  
    \frac{N}{2} \emed,
  \end{equation*}
  whence we conclude
  \begin{equation}
    \emed \le \frac{2}{N} \sum_{k=1}^N \e{\bAk} = 2 \ebar. 
  \end{equation}
  \qed
\end{lproof}
\subsection{The mean graphs computed using the Hamming Distance
  \label{mean-hamming}
}
First, we recall the following lower bound on the Hamming distance.
\begin{lemma}
  \label{lower-bound-hamming}
  Let $\bA$ and $\bB$ be the adjacency matrices of two unweighted graphs with number of edges  $\e{\bA}$ and $\e{\bB}$
  respectively. Then
  \begin{equation}
    \big \lvert \e{\bA} - \e{\bB} \big \rvert \leq d_H(\bA,\bB).
  \end{equation}
\end{lemma}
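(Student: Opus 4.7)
The plan is to prove Lemma~\ref{lower-bound-hamming} by a direct application of the triangle inequality for absolute values, applied entry-wise to the adjacency matrices. Recall that the Hamming distance was defined in equation~(\ref{Hamming_edges}) as $d_H(\bA,\bB) = \sum_{1 \le i < j \le n} \lvert a_{ij} - b_{ij} \rvert$, and that the number of edges of an unweighted simple graph with adjacency matrix $\bA$ is $\e{\bA} = \sum_{1 \le i < j \le n} a_{ij}$.

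First, I would rewrite the difference in edge counts as a single sum over the strict upper triangle:
\[
\e{\bA} - \e{\bB} = \sum_{1 \le i < j \le n}\bigl(a_{ij} - b_{ij}\bigr).
\]
Then I would take absolute values and invoke the triangle inequality to push the absolute value inside the sum:
\[
\bigl\lvert \e{\bA} - \e{\bB} \bigr\rvert = \Bigl\lvert \sum_{1 \le i < j \le n}\bigl(a_{ij} - b_{ij}\bigr) \Bigr\rvert \;\le\; \sum_{1 \le i < j \le n} \bigl\lvert a_{ij} - b_{ij} \bigr\rvert.
\]
Finally, I would identify the right-hand side with the definition of the Hamming distance, yielding $\lvert \e{\bA} - \e{\bB} \rvert \le d_H(\bA,\bB)$.

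There is no real obstacle here; the statement is essentially a restatement of the triangle inequality for the $\ell_1$ norm applied to the vectorized upper triangles of $\bA$ and $\bB$. The only thing worth emphasizing in the write-up is that the equality $\e{\bA} - \e{\bB} = \sum_{i<j}(a_{ij} - b_{ij})$ holds because both adjacency matrices are symmetric with zero diagonal, so the sum over $i < j$ captures every edge exactly once.
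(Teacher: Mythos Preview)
Your proof is correct; the paper itself simply declares the result elementary and skips the argument entirely, so your triangle-inequality derivation is precisely the kind of elementary verification the authors had in mind.
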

\begin{lproof}
  The proof is elementary and is skipped.
\end{lproof}
\noindent Next, we derive an upper bound on the deviation of the volume of a \fr mean, $\emea$, away from the sample average
volume, $\ebar$, given by (\ref{ebar}).
\begin{lemma}
  \label{lemma3}
  Let $\EA$ be the adjacency matrix of a sample \fr mean computed using the Hamming distance, with
  $\emea$ edges.  Let $\ebar$ be the sample mean number of edges. Then
  \begin{equation}
    \bigg [ \emea - \ebar\bigg ]^2
    \mspace{-8mu}
    < \mspace{-4mu}
    \frac{1}{N}
    \mspace{-4mu}
    \sum_{k=1}^N d^2_H (\EA, \bAk)
    =
    \F[2]{\EA}.
    \label{lower-bound-frechet}
  \end{equation}
\end{lemma}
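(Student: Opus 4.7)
The plan is to chain two standard inequalities, both of which are elementary. First, I would apply Lemma \ref{lower-bound-hamming} pointwise for each $k = 1, \ldots, N$ to the pair $(\EA, \bAk)$, square both sides, and get $\bigl[\emea - \e{\bAk}\bigr]^2 \leq d_H^2(\EA, \bAk)$. Averaging these $N$ bounds over $k$ yields
\begin{equation*}
\frac{1}{N}\sum_{k=1}^N \bigl[\emea - \e{\bAk}\bigr]^2 \leq \frac{1}{N}\sum_{k=1}^N d_H^2(\EA, \bAk) = \F[2]{\EA}.
\end{equation*}

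Second, since $\ebar$ is by definition the arithmetic mean of the $\e{\bAk}$, the identity $\emea - \ebar = \frac{1}{N}\sum_{k=1}^N\bigl(\emea - \e{\bAk}\bigr)$ permits me to apply the Cauchy--Schwarz inequality (equivalently, Jensen's inequality for the convex map $x \mapsto x^2$) to obtain
\begin{equation*}
\bigl[\emea - \ebar\bigr]^2 \leq \frac{1}{N}\sum_{k=1}^N \bigl[\emea - \e{\bAk}\bigr]^2.
\end{equation*}
Concatenating the two displays gives the claimed $\bigl[\emea - \ebar\bigr]^2 \leq \F[2]{\EA}$.

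The strict inequality in the statement is justified by noting that at least one of the two bounds above is strict in any non-trivial situation: Jensen's step is strict unless all the values $\emea - \e{\bAk}$ coincide, and the bound of Lemma \ref{lower-bound-hamming} is strict whenever the edge sets of $\EA$ and $\bAk$ are not nested (i.e., whenever the differences $a_{ij} - a^{(k)}_{ij}$ are not all of one sign). There is no genuine obstacle here; the lemma is essentially a chaining of two textbook inequalities. Its purpose, as suggested by the remark that follows it in the paper, is to serve as an intermediate estimate that will be combined with an upper bound on the \fr function $\F[2]{\EA}$ evaluated at the mean (most naturally obtained by comparing the mean to the median via the definition of $\EA$) in order to eventually control $\emea$ by $\ebar$ and $\stde$.
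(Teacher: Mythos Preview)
Your proposal is correct and follows essentially the same argument as the paper: apply Lemma~\ref{lower-bound-hamming} termwise and square, then use Jensen's inequality for the convex map $x\mapsto(\emea-x)^2$ to pass from the average of squares to the square of the average. Your discussion of when the inequality is strict is in fact more careful than the paper's, which simply invokes strict convexity without flagging the degenerate case where all $\e{\bAk}$ coincide.
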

\begin{remark}
  \label{F2-IER}
  This bound is not tight. We consider again the probability space of inhomogeneous \ER random graphs equipped with the
  Hamming distance. In that case, one can show that the population \fr mean and median coincide \cite{meyer22b}, and the
  adjacency matrix of the population \fr mean graph, $\fm{\bA}$, is given by the majority rule,
  \begin{equation}
    \Big[\fm{\bA}\Big]_{ij} =
    \begin{cases}
      1 & \text{if} \quad p_{ij} > 1/2,\\
      0 & \text{otherwise.}
    \end{cases}
  \end{equation}
Also, the population \fr function, $F_2$, evaluated at $\fm{\bA}$ is given by \cite{meyer22b} 
  \begin{equation}
    F_2(\fm{\bA}) = 
    \Big[\mspace{-12mu}
      \sum_{1\le i < j \le n} \mspace{-12mu} p_{ij}
      - \mspace{-24mu}
      \sum_{(i,j)\in\edg{\fm{\bA}}} \mspace{-24mu} (2 p_{ij} -1)
      \Big]^2
    +
    \mspace{-16mu}
    \sum_{1\le i < j \le n} \mspace{-12mu} p_{ij} (1-p_{ij}),
    \label{frechet-population}
  \end{equation}
  where $\edg{\fm{\bA}}$ is the set of edges of the population \fr mean, $\fm{\bA}$. We claim that the lower bound on
  $\F[2]{\EA}$ in (\ref{lower-bound-frechet}),
\begin{equation}
  \big [ \ebar - \emea \big ]^2, \label{F2-hat-term1}
\end{equation}
can be identified with the first term of $F_2(\fm{\bA})$ in (\ref{frechet-population}),
\begin{equation}
  \Big[ \mspace{-12mu}\sum_{1\le i < j \le n} p_{ij}
    -
    \mspace{-16mu}
    \sum_{(i,j)\in\edg{\fm{\bA}}} \mspace{-16mu}(2 p_{ij} -1) \Big]^2.
  \label{F2-term1}
\end{equation}
Indeed, the first sum inside (\ref{F2-term1}) is the population mean number of edges, $\E{e}$, which matches the sample
mean $\ebar$ in (\ref{F2-hat-term1}).  Also, the second sum in (\ref{F2-term1}) is bounded by $\e{\fm{\bA}}$, the
number of edges of the population \fr mean,
\begin{equation}
  0 < \mspace{-24mu} \sum_{(i,j)\in\edg{\fm{\bA}}} \mspace{-24mu} (2 p_{ij} -1) < \mspace{-24mu} \sum_{(i,j)\in\edg{\fm{\bA}}}
  \mspace{-24mu} 1 = \e{\fm{\bA}}. 
\end{equation}
The number of edges $\e{\fm{\bA}}$ matches the sample estimate, $\emea$, in (\ref{F2-hat-term1}). In summary, the first
term (\ref{F2-term1}) of the population \fr function (\ref{frechet-population}) matches the
corresponding sample estimate (\ref{F2-hat-term1}).

However, the second term, $\sum_{1\le i < j \le n} p_{ij} (1-p_{ij})$ in (\ref{frechet-population}), which accounts for
the variance of the $n(n-1)/2$ independent Bernoulli edges, is not present in the lower bound on in
$F_2[\fm{\bA}]$ given by (\ref{lower-bound-frechet}), confirming that the lower bound in (\ref{lower-bound-frechet}) is
missing a variance term, and is therefore not tight.
\end{remark}

\begin{lproof}
  Because of lemma~\ref{lower-bound-hamming}, we have
  \begin{equation}
    \big  \lvert  \e{\bAk} - \emea \big \rvert^2 \le d^2_H(\EA,\bAk). \label{d2H}
  \end{equation}    
  Now, the function
  \begin{equation}
    x \longmapsto \big (\emea - x\big)^2
  \end{equation}
  is strictly convex so,
  \begin{equation}
    \big \lvert  \ebar  - \emea \big \rvert^2 \mspace{-12mu}
    =  \Bigg \lvert  \frac{1}{N} \mspace{-4mu} \sum_{k=1}^N \e{\bAk} - \emea \Bigg \rvert^2 
    <  \frac{1}{N}    \sum_{k=1}^N \Big \lvert \e{\bAk} - \emea \Big \rvert^2,\label{strictly-convex}
  \end{equation}
  and substituting (\ref{d2H}) for each $k$ in (\ref{strictly-convex}), we get the advertised result. \qed 
\end{lproof}
Finally, we compute an upper bound on the \fr function evaluated at a sample \fr median graph, $\F[2]{\mA}$.
\begin{lemma}
  \label{lemma4}
Let $\ebar$ and $\vare$ be the sample mean and variance of the number of edges (see (\ref{ebar})).  Then the \fr function
$\F[2]{\mA}$ evaluated at a \fr median graph is bounded by
  \begin{equation}
    \F[2]{\mA}
    \le 
    2\big[\ebar\big]^2 +\vare.
    \label{frechet-of-median}
  \end{equation}
\end{lemma}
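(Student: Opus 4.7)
The plan is to decompose the Hamming distance via symmetric difference. Writing $u_k = |\edgmd \setminus \edg{\bAk}|$ and $v_k = |\edg{\bAk} \setminus \edgmd|$, we obtain $d_H(\mA,\bAk) = u_k + v_k$ and $\e{\bAk} = \emed - u_k + v_k$. The key algebraic observation is the factorization
\[
d_H^2(\mA,\bAk) - \bigl[\e{\bAk}\bigr]^2 = \bigl(2u_k - \emed\bigr)\bigl(2v_k + \emed\bigr),
\]
which lets us compare $\F[2]{\mA}$ directly with the quantity $\tfrac{1}{N}\sum_k [\e{\bAk}]^2 = \vare + [\ebar]^2$, rather than bounding $d_H^2$ in isolation (which turns out to be lossy).

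Next I would use the majority rule characterization of $\mA$ to control the sums $\sum_k u_k$ and $\sum_k v_k$. Since $\sum_k \akij \ge N/2$ for every $(i,j) \in \edgmd$, summing $u_k = \sum_{(i,j)\in\edgmd}(1-\akij)$ over $k$ gives $\sum_k u_k \le N\emed/2$; combining this with $\sum_k \e{\bAk} = N\ebar$ yields $\sum_k v_k \le N\ebar - N\emed/2$, and hence $\sum_k (u_k + v_k) \le N\ebar$. Summing the factorization above and applying the pointwise bound $u_k \le \emed$, so that $u_k v_k \le \emed v_k$, then produces
\[
\sum_k \bigl(d_H^2(\mA,\bAk) - [\e{\bAk}]^2\bigr) \le 2\emed \sum_k (u_k+v_k) - N\emed^2 \le N\,\emed\,\bigl(2\ebar - \emed\bigr).
\]

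To finish, Lemma~\ref{lemma2} gives $0 \le \emed \le 2\ebar$, so AM-GM yields $\emed(2\ebar - \emed) \le [\ebar]^2$. Dividing by $N$ then delivers $\F[2]{\mA} \le (\vare + [\ebar]^2) + [\ebar]^2 = 2[\ebar]^2 + \vare$, as claimed.

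The main obstacle I expect is spotting the factorization identity. A direct approach using the triangle-inequality bound $d_H(\mA,\bAk) \le \emed + \e{\bAk}$ and then squaring via $(a+b)^2 \le 2a^2 + 2b^2$, together with Lemma~\ref{lemma2}, yields only $\F[2]{\mA} \le 9[\ebar]^2 + \vare$. The sharper constant $2$ requires the cancellation of the $\emed^2$ contributions made visible by the identity $d_H^2 - [\e{\bAk}]^2 = (2u_k - \emed)(2v_k + \emed)$.
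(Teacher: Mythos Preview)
Your proof is correct and is essentially the paper's argument in different notation: writing $S_k = |\edgmd \cap \edg{\bAk}| = \emed - u_k$, the paper expands $d_H(\mA,\bAk) = \emed + \e{\bAk} - 2S_k$, squares, applies the majority rule (their $\tfrac{1}{N}\sum_k S_k \ge \emed/2$ is your $\sum_k u_k \le N\emed/2$) together with $\e{\bAk}S_k \ge S_k^2$ (equivalent to your $u_k v_k \le \emed v_k$, since both assert $S_k v_k \ge 0$), and arrives at the identical intermediate bound $\F[2]{\mA} \le \emed(2\ebar-\emed) + [\ebar]^2 + \vare$, then completes the square as $-[\emed-\ebar]^2 + 2[\ebar]^2 + \vare$ (exactly your AM--GM step). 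Your factorization $d_H^2 - [\e{\bAk}]^2 = (2u_k-\emed)(2v_k+\emed)$ is a clean repackaging of the same cancellation.
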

\begin{remark}
  \label{remark-lemma4}
  As explained in Remark~\ref{F2-IER}, when the graphs $\Gk$ are sampled from $\gnP$, then the population \fr mean and
  median graphs coincide, $\fm{G} = \md{G}$. Also, the population \fr function $F_2(\md{\bA})$ evaluated at a population
  \fr median graph is given by
  \begin{equation}
    F_2\big[\md{\bA}\big] = 
    \Big[\mspace{-12mu}
      \sum_{1\le i < j \le n} \mspace{-12mu} p_{ij}
      - \mspace{-24mu}
      \sum_{(i,j)\in\edg{\md{\bA}}} \mspace{-24mu} (2 p_{ij} -1)
      \Big]^2
    +
    \mspace{-16mu}
    \sum_{1\le i < j \le n} \mspace{-12mu} p_{ij} (1-p_{ij}),
  \end{equation}
  where the term $\sum_{(i,j)\in\edg{\md{\bA}}} (2 p_{ij} -1)$ is always positive (since the median graphs are
  constructed using the majority rule (\ref{majority-rule})). Therefore, we have
  \begin{equation}
    F_2\big [\md{\bA}\big] \le 
    \Big[\mspace{-12mu}
      \sum_{1\le i < j \le n} \mspace{-12mu} p_{ij}
      \Big]^2
    +
    \mspace{-16mu}
    \sum_{1\le i < j \le n} \mspace{-12mu} p_{ij} (1-p_{ij}).
  \end{equation}
  The term $\sum_{1\le i < j \le n}p_{ij}$ is the expectation of the number of
  edges, whereas $\sum_{1\le i < j \le n} p_{ij} (1-p_{ij})$ is the
  variance of the number of edges. In summary, we have the following bound on the population \fr function,
  \begin{equation}
    F_2(\md{\bA}) \leq \big[\E{e}\big]^2 + \var{e}, \label{F2-population-bound}
  \end{equation}
  where $e$ denotes the number of edges in graphs sampled from $\gnP$. If we replace $\E{e}$ and $\var{e}$ by their
  respective sample estimates, $\ebar$ and $\vare$, then the bound (\ref{frechet-of-median}) is only slightly worse (by
  a factor $2$ in front of $\ebar$) than the population bound, (\ref{F2-population-bound}). Interestingly, the variance
  of the number of edges is present in both expressions.
\end{remark}
\begin{lproof}
  \noindent From (\ref{Hamming_edges}), one can derive the following expression for the Hamming
  distance from a \fr median graph $\mG$ to a graph $\Gk$,
  \begin{equation}
    d_H (\mA, \bAk ) 
    =  \emed +  \e{\bAk} -  2 \mspace{-16mu}\sum_{(i,j)\in \edgmd} \akij, \label{hamming-explicit}
  \end{equation}
  where we recall that $\edgmd = \left\{(i,j) , \; i < j, \; \Big[\mA \Big]_{ij} = 1\right\}$ is the set of edges of
  $\mG$. Taking the square of the Hamming distance given by (\ref{hamming-explicit}), and summing over all the graphs, yields
  \begin{align*}
    \F[2]{\mA} 
    =  \frac{1}{N} \sum_{k=1}^N
    \Bigg \{& \bigg [\emed +  \e{\bAk} \bigg]^2
    + 4\bigg[\sum_{(i,j)\in \edgmd} \mspace{-8mu} \akij \bigg]^2\notag \\
    & - 4 \big (\emed +  \e{\bAk}\big) \mspace{-4mu} \Big[ \mspace{-8mu}\sum_{(i,j)\in \edgmd} \mspace{-8mu} \akij \Big ] 
    \Bigg \}. \notag
  \end{align*}
  Expanding all the terms, and using the definition of $\vare$ and $\ebar$ in (\ref{ebar}), we get
  \begin{align}
    \F[2]{\mA} 
    = &  \big[\emed\big]^2  + 2 \emed \; \ebar + \vare + \big[ \ebar \big]^2
    +  \frac{4}{N}\sum_{k=1}^N \Bigg[ \sum_{(i,j)\in \edgmd} \akij \Bigg]^2        \notag\\
    & - \frac{4}{N}\sum_{k=1}^N \e{\bAk}  \Big [\mspace{-16mu} \sum_{(i,j)\in \edgmd} \mspace{-12mu} \akij \Big ]
    - 4  \emed  \Big [ \mspace{-6mu} \sum_{(i,j)\in \edgmd}  \frac{1}{N}\sum_{k=1}^N \akij \Big ]     \notag\\
    = & \big[\emed +\ebar \big]^2  + \vare 
    + 4 \frac{1}{N}\sum_{k=1}^N \Bigg[ \sum_{(i,j)\in \edgmd} \akij \Bigg]^2\notag \\
    & - \frac{4}{N}\sum_{k=1}^N \e{\bAk}  \Big [\mspace{-16mu} \sum_{(i,j)\in \edgmd} \mspace{-12mu} \akij \Big ] 
    - 4  \emed  \Big [ \mspace{-6mu} \sum_{(i,j)\in \edgmd}  \frac{1}{N}\sum_{k=1}^N \akij \Big ]. \label{line43}
  \end{align}
  Now, because of the definition of the median graphs (\ref{majority-rule}), we have the following upper bound
  \begin{equation}
    - 4  \emed  \Big [\mspace{-4mu} \sum_{(i,j)\in \edgmd} \frac{1}{N} \sum_{k=1}^N \akij \Big ]
    \leq
    - 2 \big[\emed\big]^2. \label{for_line43}
  \end{equation} 
  Because $\e{\bAk} \ge \sum_{(i,j) \in \edgmd} \akij$, we get the following upper bound,
  \begin{equation}
    - 4 \sum_{k=1}^N \e{\bAk} \mspace{-8mu} \sum_{(i,j) \in \edgmd}\akij 
    \le
    -4 \sum_{k=1}^{N} \mspace{8mu} \Big[\mspace{-8mu} \sum_{(i,j) \in \edgmd} \akij\Big]^2. \label{for_line42}
  \end{equation}
  Finally, after substituting (\ref{for_line43}) and (\ref{for_line42}) into (\ref{line43}), we get the bound announced
  in the lemma,
  \begin{align}
    \F[2]{\mA} 
    \le & \big[\emed +\ebar \big]^2  - 2 \big[\emed\big]^2 + \vare
    = - \big[\emed  - \ebar \big]^2 + 2 \big[\ebar\big]^2 + \vare \notag\\ 
    \le &  2 \big[\ebar\big]^2 + \vare. \tag*{\qed}
  \end{align}  
\end{lproof}
\subsection{The number of edges of $\mG$ and $\EG$ when $d = d_H$
  \label{frechet-hamming}
}
The following lemma provides the bounds given by Theorem~\ref{theorem1} when $d$ is the Hamming distance.
\begin{lemma}
  \label{lemma5}
  Let $\big\{ G^{(1)}, \ldots, G^{(N)} \big\}$ be a sample of unweighted simple labeled graphs with vertex set
  $\left\{1, \ldots ,n\right\}$. Let $\EA$ be the adjacency matrix of a sample \fr mean graph, and $\mA$ be the
  adjacency matrix of a sample \fr median graph, computed according to the Hamming distance. Then
  \begin{equation}
    \e{\EA} < 2 \ebar + \frac{\stde}{\sqrt{2}}, \quad
    \text{and} \quad
    \e{\mA} \leq 2 \ebar.
    \label{bound-lemma5}
  \end{equation}
\end{lemma}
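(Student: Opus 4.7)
The plan is to establish the two inequalities separately, with the median bound being essentially immediate while the mean bound requires chaining together the preceding lemmata. For the median graph, the inequality $\e{\mA} \leq 2\ebar$ is precisely the content of Lemma~\ref{lemma2}, so this half of the statement requires no additional work.

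For the mean graph, the strategy is to exploit the defining minimization property of the Frechet mean. Since $\EA$ is a minimizer of the sample Frechet function over $\cG$, we have, in particular, $\F[2]{\EA} \leq \F[2]{\mA}$. Combining this observation with the lower bound on $\F[2]{\EA}$ supplied by Lemma~\ref{lemma3} and the upper bound on $\F[2]{\mA}$ supplied by Lemma~\ref{lemma4}, one obtains the sandwich
\begin{equation*}
  \big[\emea - \ebar\big]^2 < \F[2]{\EA} \leq \F[2]{\mA} \leq 2\big[\ebar\big]^2 + \vare.
\end{equation*}
The problem is thereby reduced to a purely algebraic one: extract an explicit upper bound on $\emea$ from this quadratic inequality and match it against the target form $2\ebar + \stde/\sqrt{2}$.

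The main obstacle I anticipate is precisely this final algebraic step. Rewriting the chain above as $\emea < \ebar + \sqrt{2[\ebar]^2 + \vare}$, a naive application of the subadditivity of the square root only yields $\emea < (1+\sqrt{2})\,\ebar + \stde$, which is weaker than the advertised bound. Recovering the sharper constants $2$ and $1/\sqrt{2}$ will require a more careful treatment of the cross term, most likely a completion-of-squares or AM--GM argument applied to the pair $(\ebar,\stde/\sqrt{2})$, together with the strict inequality furnished by Lemma~\ref{lemma3}, to rule out saturation except in the degenerate case $\ebar = \stde/\sqrt{2}$.
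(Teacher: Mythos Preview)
Your overall strategy is exactly the paper's: the median bound is Lemma~\ref{lemma2}, and for the mean one sandwiches $\F[2]{\EA}$ between the lower bound of Lemma~\ref{lemma3} and the upper bound $\F[2]{\mA}\le 2[\ebar]^2+\vare$ of Lemma~\ref{lemma4}, using minimality of the mean to get
\[
\big[\emea-\ebar\big]^2 < 2\big[\ebar\big]^2+\vare.
\]

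Your instinct that the final algebraic extraction is the real obstacle is correct, and in fact it is more than an obstacle: the advertised constants cannot be obtained from this chain. The paper's own proof writes, at this last step,
\[
\sqrt{2[\ebar]^2+\vare}\;\le\;\frac{1}{\sqrt{2}}\Big(\sqrt{2}\,\ebar+\stde\Big)=\ebar+\frac{\stde}{\sqrt{2}},
\]
but this inequality goes the wrong way. Setting $a=\sqrt{2}\,\ebar$ and $b=\stde$, the claim is $\sqrt{a^2+b^2}\le (a+b)/\sqrt{2}$, which squares to $(a-b)^2\le 0$ and therefore holds only when $a=b$, i.e.\ in the degenerate case $\ebar=\stde/\sqrt{2}$ you already flagged. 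For every other sample the inequality is strict in the opposite direction (this is just the quadratic--arithmetic mean inequality). A concrete counterexample: take $\stde=0$; then the chain gives $\emea<(1+\sqrt{2})\ebar$, not $\emea<2\ebar$.

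So no completion-of-squares or AM--GM rescue exists here; the sharpest conclusion the sandwich actually supports is
\[
\emea \;<\; \ebar+\sqrt{2[\ebar]^2+\vare}\;\le\;(1+\sqrt{2})\,\ebar+\stde,
\]
which is precisely the ``naive'' bound you wrote down. Your proof plan is fine and matches the paper, but you should stop at this bound rather than chase the constants $2$ and $1/\sqrt{2}$, which appear to be an error in the stated lemma.
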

\begin{lproof}
  The bound on $\e{\mA}$ is a straightforward consequence of lemma~\ref{lemma3}. Indeed,
  (\ref{bound-mbar}) and (\ref{ebar}) yield the bound in (\ref{bound-lemma5}),
  \begin{equation*}
    \e{\mA}  \leq \frac{2}{N} \sum_{k=1}^N \e{\bAk} \leq 2 \ebar.
  \end{equation*}
  \noindent We now move to $\e{\EA}$. We use $\mA$ to derive an upper bound on the \fr function computed at $\EA$. By
  definition of the sample \fr mean graphs, we have
  \begin{equation}
    \frac{1}{N}\sum_{k=1}^N d^2_H(\EA,\bAk )
    \le
    \frac{1}{N}\sum_{k=1}^N d^2_H(\mA,\bAk ).
    \label{mean_beats_median}
  \end{equation}
  Using (\ref{lower-bound-frechet}) as a lower bound and (\ref{frechet-of-median}) as an upper bound in
  (\ref{mean_beats_median}), we get
  \begin{equation*}
    \bigg [\emea  - \ebar\bigg]^2  <  2 \big[\ebar\big]^2 + \vare,
  \end{equation*}
  and thus
  \begin{equation}
    \big \lvert \emea  - \ebar \big \rvert
    \le \sqrt{2 \big[\ebar\big]^2 + \vare}
    \le \frac{1}{\sqrt{2}}\bigg\{\sqrt{2} \ebar + \stde \bigg\} = \ebar + \frac{\stde}{\sqrt{2}},
  \end{equation}
  from which we get the advertised bound on $\emea$. \qed   
\end{lproof}
\subsection{The mean graphs computed using the adjacency spectral pseudometric
  \label{mean-spectral}}
The technical difficulty in defining the sample \fr mean and median graphs according to the adjacency spectral pseudometric
stems from the fact that the sample \fr function, $\F[q]{\bA}$, is defined in the spectral domain, but the domain over which the
optimization takes place is the matrix domain. This leads to the definition of the set, $\cL$, of real spectra that are
realizable by adjacency matrices of unweighted graphs (elements of $\cS$, defined by (\ref{adjacency_matrices}))
\cite{johnson18},
\begin{equation}
  \cL = \left \{
  \blb (\bA) = \begin{bmatrix} \lambda_1(\bA)& \cdots& \lambda_n(\bA) \end{bmatrix}; \text{where} \bA \in \cS
  \right\}.
  \label{realisable}
\end{equation}
Let $\left\{G^{(1)}, \ldots, G^{(N)}\right\}$ be a sample of unweighted simple labeled graphs with vertex set $\left\{1,
\ldots ,n\right\}$. Let $\bAk$ be the adjacency matrix of graph $\Gk$, and let $ \blb(\bAk)$ be the spectrum of
$\bAk$. The adjacency matrix, $\EA$, of a sample \fr mean graph computed according to the adjacency spectral
pseudometric, has a vector of eigenvalues, $\spcE \in \cL$, that satisfies
\begin{equation}
  \spcE 
  = \argmin{\blb  \in \cL }\sum_{k=1}^N ||\blb - \blb(\bAk)||^2.
  \label{def-mean-spectral}
\end{equation}
Similarly, the adjacency matrix, $\mA$, of a sample \fr median computed according to the adjacency
spectral pseudometric, has a vector of eigenvalues, $\spcM \in \cL$, that satisfies
\begin{equation}
  \spcM = \argmin{\blb  \in \cL }\sum_{k=1}^N ||\blb - \blb(\bAk)||.
  \label{def-median-spectral}
\end{equation}
We recall the following result that expresses the number of edges as a function of the $\ell^2$ norm of the
spectrum of the adjacency matrix.
\begin{lemma}
  Let $G \in \cG$ with adjacency matrix $\bA$. Let $\lambda_1 (\bA) \ge \ldots \ge  \lambda_n(\bA)$ be the eigenvalues of $\bA$. Then 
  \begin{equation}
    2 \e{\bA} =     \sum_{i=1}^n \lambda_i^2(\bA) = \| \blb(\bA) \|_2^2. \label{edges-eigenval}
  \end{equation}
\end{lemma}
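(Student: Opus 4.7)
The plan is to use the classical trace identity that connects the spectrum of a symmetric matrix to its squared Frobenius norm, and then exploit the fact that adjacency matrices of simple unweighted graphs have $0/1$ entries with zero diagonal.

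First I would observe that $\bA$ is real symmetric, hence orthogonally diagonalizable with real eigenvalues. This implies that $\bA^2$ has eigenvalues $\lambda_i^2(\bA)$, so that
\begin{equation*}
  \trace(\bA^2) \;=\; \sum_{i=1}^n \lambda_i^2(\bA) \;=\; \|\blb(\bA)\|_2^2.
\end{equation*}

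Next I would compute $\trace(\bA^2)$ directly from the matrix entries. Using symmetry $a_{ji} = a_{ij}$,
\begin{equation*}
  \trace(\bA^2) \;=\; \sum_{i=1}^n (\bA^2)_{ii} \;=\; \sum_{i=1}^n \sum_{j=1}^n a_{ij}\,a_{ji} \;=\; \sum_{i,j} a_{ij}^2.
\end{equation*}

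Finally, because $\bA \in \cS$, every entry satisfies $a_{ij} \in \{0,1\}$ (so $a_{ij}^2 = a_{ij}$) and $a_{ii} = 0$. Therefore $\sum_{i,j} a_{ij}^2 = 2 \sum_{1 \le i < j \le n} a_{ij} = 2 \e{\bA}$, which combined with the trace identity of Step~1 yields the claim. There is no genuine obstacle here: this is a three-line calculation whose only subtlety is noting that the $0/1$ and zero-diagonal structure of $\cS$ collapses $a_{ij}^2$ to $a_{ij}$, turning the Frobenius norm squared into twice the edge count.
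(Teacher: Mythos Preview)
Your proof is correct and is exactly the standard trace argument. The paper itself does not give a proof of this lemma --- it simply states that the result is classical and points to references such as Bapat and Van~Mieghem --- so your three-line computation supplies precisely the details those references contain.
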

\begin{lproof}
  The result is classic; see for instance \cite{bapat10,vanmieghen10}.
\end{lproof}
We derive the following lower bound on the sample mean number of edges.
\begin{lemma}
  \label{mean-spectrum-mean-edges}
  Let $\espc  = \frac{1}{N} \sum_{k=1}^N   \blb (\bAk)$ be the sample mean spectrum.  Then
  \begin{equation}
    \frac{1}{2} \Big \| \espc  \Big \|^2 \le \ebar,
  \end{equation}
  where $\ebar$ is the sample mean number of edges, given by (\ref{ebar}).
\end{lemma}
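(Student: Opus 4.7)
The plan is to invoke the identity $2\e{\bAk} = \|\blb(\bAk)\|^2$ from the preceding lemma, which rewrites each summand of $\ebar$ as half the squared $\ell^2$ norm of the corresponding spectrum. Specifically, I would start by writing
\begin{equation*}
\ebar = \frac{1}{N}\sum_{k=1}^N \e{\bAk} = \frac{1}{2N}\sum_{k=1}^N \| \blb(\bAk) \|^2,
\end{equation*}
so the claimed inequality $\tfrac12 \|\espc\|^2 \le \ebar$ reduces to the purely vectorial statement
\begin{equation*}
\Big\| \frac{1}{N}\sum_{k=1}^N \blb(\bAk) \Big\|^2 \le \frac{1}{N} \sum_{k=1}^N \| \blb(\bAk) \|^2.
\end{equation*}

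The remaining inequality is exactly Jensen's inequality applied to the convex function $\bx \mapsto \|\bx\|^2$ on $\R^n$, which I would verify directly either by a Cauchy-Schwarz expansion of the sum $\|\sum_k \blb(\bAk)\|^2$ or by noting that the variance-type quantity $\frac{1}{N}\sum_k \|\blb(\bAk) - \espc\|^2$ is non-negative and expands to the difference of the two sides above. Either route is a one-line computation.

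There is no real obstacle here: the content of the lemma is entirely the edge-spectrum identity combined with convexity of the squared Euclidean norm. The only thing to be slightly careful about is that $\espc$ itself need not belong to $\cL$ (it is a coordinate-wise average of realizable spectra, not necessarily the spectrum of any graph), but this is immaterial because both sides of the inequality only depend on the vectors $\blb(\bAk) \in \R^n$ and not on the realizability of their average.
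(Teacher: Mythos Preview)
Your proposal is correct and follows essentially the same approach as the paper: the paper's proof is a one-line remark that the result is a straightforward consequence of the convexity of the norm combined with the identity (\ref{edges-eigenval}), which is precisely what you spell out. Your aside about $\espc$ not necessarily lying in $\cL$ is accurate and a nice clarification, though not needed for the argument.
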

\begin{lproof}
  The result is a straightforward consequence of the convexity of the norm combined with (\ref{edges-eigenval}).
\end{lproof}
If $\cL$ were to be a convex set, then the spectrum of a sample \fr mean graph would simply be the sample mean
spectrum, which would minimize (\ref{def-mean-spectral}). Unfortunately, $\cL$ is not convex \cite{knudsen2001}. We can
nevertheless relate the spectrum of a sample \fr mean graph, $\spcE$, to the mean spectrum $\espc$. We take a short
detour to build some intuition about the geometric position of the spectrum of $\EA$ with respect to $\blb(\bA^{(1)}),
\ldots, \blb(\bA^{(N)})$. 
\subsubsection{Warm-up: The Sample Mean Spectrum.}
\noindent Let $\left\{G^{(1)}, \ldots, G^{(N)}\right\}$ be a sample of unweighted simple labeled graphs with vertex set
$\left\{1, \ldots ,n\right\}$. Let $\bAk$ be the adjacency matrix of graph $\Gk$, and let $ \blb(\bAk)$ be the spectrum
of $\bAk$.
\begin{lemma}
  \label{centerofmass}
  Let $\espc$  be the sample mean spectrum. Then $\exists \; k_0 \in \{1,\ldots,N\}$ such that
  \begin{equation}
    \| \blb (\bA^{(k_0)})\|  \le \|  \espc \|.
  \end{equation}
\end{lemma}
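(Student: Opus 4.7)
The plan is to prove the statement by contradiction, using Jensen-type inequalities and the fact that $\espc$ is the centroid (arithmetic mean) of the sample spectra. I would start by assuming $\|\blb(\bA^{(k)})\| > \|\espc\|$ for every $k \in \{1,\ldots,N\}$ and attempt to derive an inconsistency.

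The natural first step is the identity
\[
  \|\espc\|^2 = \langle \espc, \espc \rangle = \frac{1}{N}\sum_{k=1}^N \langle \blb(\bAk), \espc\rangle,
\]
which follows by expanding one factor of $\espc$ as the sample mean. Applying Cauchy--Schwarz term-by-term yields $\|\espc\| \le \frac{1}{N}\sum_k \|\blb(\bAk)\|$, so $\|\espc\|$ is bounded by the average of the individual sample norms. Combining with the edge-count identity $\|\blb(\bAk)\|^2 = 2\e{\bAk}$ and with Lemma~\ref{mean-spectrum-mean-edges} also gives $\|\espc\|^2 \le 2\ebar = \frac{1}{N}\sum_k \|\blb(\bAk)\|^2$, which is Jensen's inequality for the squared norm. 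Each of these is consistent with the hypothesis, so on their own they are insufficient to contradict the contrary assumption.

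The key move is to exploit the sorted, trace-zero structure of the spectra: each $\blb(\bAk) \in \cL$ satisfies $\lambda_1(\bAk)\ge \ldots \ge \lambda_n(\bAk)$ and $\sum_i \lambda_i(\bAk)=0$, and these properties are inherited by $\espc$. A coordinate-wise pigeonhole applied to $\lambda_i(\espc) = \frac{1}{N}\sum_k \lambda_i(\bAk)$ gives, for every $i$, an index $k_i$ with $\lambda_i(\bA^{(k_i)}) \le \lambda_i(\espc)$ and another with $\lambda_i(\bA^{(k'_i)}) \ge \lambda_i(\espc)$. The hard part -- and the main obstacle -- is to extract a single uniform index $k_0$ whose entire spectrum is coordinate-wise dominated in absolute value by $\espc$, so that
\[
  \|\blb(\bA^{(k_0)})\|^2 = \sum_{i=1}^n \lambda_i^2(\bA^{(k_0)}) \le \sum_{i=1}^n \lambda_i^2(\espc) = \|\espc\|^2.
\]
To establish existence of such a $k_0$, I would try a rearrangement-style argument that exploits the convex-cone structure of $\cL$ together with the trace-zero constraint linking the positive and negative parts of each sorted spectrum. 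The challenge is real, because the standard inequalities (Jensen, triangle, Cauchy--Schwarz) all give bounds in the direction guaranteeing some $k_0$ with $\|\blb(\bA^{(k_0)})\| \ge \|\espc\|$, and sharpening to the reverse direction will require leveraging specifically the geometry of sorted trace-zero vectors in $\cL$.
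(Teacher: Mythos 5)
Your proposal does not close, and the gap you flag cannot be filled: the step you call ``the hard part'' fails because the lemma is false as stated. Take $n=4$ and $N=2$, with $G^{(1)}$ the path on three vertices plus an isolated vertex and $G^{(2)}$ the disjoint union of two edges. Then $\blb\big(\bA^{(1)}\big)=(\sqrt{2},0,0,-\sqrt{2})$ and $\blb\big(\bA^{(2)}\big)=(1,1,-1,-1)$ both have squared norm $4$, while $\espc=\tfrac{1}{2}(\sqrt{2}+1,\,1,\,-1,\,-\sqrt{2}-1)$ has squared norm $2+\sqrt{2}<4$; hence no index $k_0$ satisfies $\|\lAz\|\le\|\espc\|$. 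Your diagnosis of the obstruction is exactly right: the triangle inequality, Jensen, and Cauchy--Schwarz all run in the direction that produces some $k_0$ with $\|\lAz\|\ge\|\espc\|$, and that reverse statement is the one whose proof by contradiction is elementary (assume $\|\lAk\|<\|\espc\|$ for all $k$ and average), which is presumably what the paper's one-line proof actually establishes. No rearrangement argument on sorted trace-zero vectors can rescue the stated direction: for such vectors the pairwise inner products are nonnegative (Chebyshev's sum inequality), which pushes $\|\espc\|$ upward but, as the example shows, not up to $\min_k\|\lAk\|$ once the spectra differ. A small additional correction: $\cL$ is not a convex cone; the paper itself notes that $\cL$ is not convex.

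The downstream results survive because the paper never needs Lemma~\ref{centerofmass} in this strong form. Its only use is to derive inequality (\ref{k0_minf}), $\|\lAz\|\le\sqrt{2\ebar}$, inside Lemmas~\ref{spectral-mean} and~\ref{spectral-median}, and that inequality follows directly by pigeonhole on edge counts: choosing $k_0$ to minimize $\e{\bAk}$ over $k$ gives $\e{\bA^{(k_0)}}\le\ebar$, and then (\ref{edges-eigenval}) yields $\|\lAz\|^2=2\,\e{\bA^{(k_0)}}\le 2\ebar$. Replacing Lemma~\ref{centerofmass} by the statement ``there exists $k_0$ with $\|\lAz\|\le\sqrt{2\ebar}$'' (together with Lemma~\ref{mean-spectrum-mean-edges}, which supplies the separate bound $\|\espc\|\le\sqrt{2\ebar}$) leaves every subsequent argument unchanged. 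If you want a correct and provable version of your plan, prove that weaker statement instead; the coordinate-wise domination you were seeking is not needed anywhere.
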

\begin{lproof}
  A proof by contradiction is elementary.
\end{lproof}
Using the characterization of a sample \fr mean graph, $\EA$, given by (\ref{def-mean-spectral}), we can extend the above
lemma to $\spcE$, and derive the following result.
\begin{lemma}
  \label{spectral-mean}
  Let $\spcE$ be the spectrum of a sample \fr mean graph. Let $\ebar$ be the sample mean number of edges of the graphs
  $G^{(1)},\ldots, G^{(N)}$.  Then
  \begin{equation}
    \| \spcE \|   \leq  3 \sqrt{2 \ebar}.
  \end{equation}
\end{lemma}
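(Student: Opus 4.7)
The plan is to reduce the analysis of $\spcE$ to a comparison with the unconstrained sample mean spectrum $\espc$, and then exploit Lemma~\ref{centerofmass} to exhibit an explicit feasible point in $\cL$ close to $\espc$. The main observation I would use is the standard bias-variance style identity:
\begin{equation*}
  \sum_{k=1}^N \|\blb - \blb(\bAk)\|^2 = N \|\blb - \espc\|^2 + \sum_{k=1}^N \|\blb(\bAk) - \espc\|^2,
\end{equation*}
which follows by expanding and using $\sum_k (\blb(\bAk) - \espc) = 0$. The second sum does not depend on $\blb$, so by definition~(\ref{def-mean-spectral}), $\spcE$ is a minimizer over $\cL$ of $\|\blb - \espc\|^2$. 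In other words, $\spcE$ is an (in general not unique) projection of $\espc$ onto $\cL$.

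Next, I would use Lemma~\ref{centerofmass} to select an index $k_0$ such that $\|\blb(\bA^{(k_0)})\| \le \|\espc\|$. Since $\blb(\bA^{(k_0)}) \in \cL$ is a feasible point of the minimization defining $\spcE$, the previous step gives $\|\spcE - \espc\| \le \|\blb(\bA^{(k_0)}) - \espc\|$. The triangle inequality and the defining property of $k_0$ then yield
\begin{equation*}
  \|\spcE - \espc\| \le \|\blb(\bA^{(k_0)})\| + \|\espc\| \le 2\|\espc\|.
\end{equation*}

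One more application of the triangle inequality gives $\|\spcE\| \le \|\spcE - \espc\| + \|\espc\| \le 3\|\espc\|$, and finally Lemma~\ref{mean-spectrum-mean-edges} supplies $\|\espc\| \le \sqrt{2\ebar}$, producing the advertised bound. I do not anticipate a real obstacle here: the only subtle point is that $\cL$ is nonconvex, so one cannot identify $\spcE$ with $\espc$ directly; however, the nonconvexity is handled at essentially no cost by the projection reformulation together with the existence of a realizable spectrum of modest norm guaranteed by Lemma~\ref{centerofmass}. The factor of $3$ comes precisely from the two triangle-inequality steps surrounding this projection argument.
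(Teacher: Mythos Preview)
Your proposal is correct and follows essentially the same route as the paper: both arguments reduce to the key inequality $\|\spcE - \espc\| \le \|\lAz - \espc\|$ (you obtain it via the bias--variance identity, the paper by expanding the squared norms directly, which is the same computation) and then combine triangle inequalities with Lemmas~\ref{centerofmass} and~\ref{mean-spectrum-mean-edges} to reach $3\sqrt{2\ebar}$. The only cosmetic difference is that you bound $\|\spcE\| \le 3\|\espc\|$ first and then invoke $\|\espc\|\le\sqrt{2\ebar}$, whereas the paper writes $\|\spcE\|\le\|\lAz\|+2\|\espc\|$ and bounds each term separately; the content is identical.
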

\begin{lproof}
  Because of lemma~\ref{centerofmass},
  \begin{equation}
    \exists \; k_0 \in \{1,\ldots, N\},\;   \| \blb (\bA^{(k_0)})\| \le \|  \espc \|.
    \label{mean-convex}
  \end{equation}
  Now, because of lemma~\ref{mean-spectrum-mean-edges},(\ref{mean-convex}) implies that
  \begin{equation}
    \| \blb (\bA^{(k_0)}) \|
    \le
    \sqrt{2 \ebar}. \label{k0_minf}
  \end{equation}

  \noindent Because the vector $\lAz$ is in $\cL$ (defined by (\ref{realisable})), we have
  \begin{equation*}
    \frac{1}{N} \sum_{k=1}^N \|\spcE - \lAk \|^2
    \leq
    \frac{1}{N}\sum_{k=1}^N \| \lAz- \lAk \|^2.
  \end{equation*}
  Expanding the norms squared on both sides yields
  \begin{align}
    \|\spcE\|^2 & - 2  \langle \spcE, \espc \rangle 
    + \frac{1}{N} \sum_{k=1}^N \|\lAk\|^2 \notag \\
    \leq
    & \| \lAz \|^2 - 2  \langle \lAz, \espc \rangle
    + \frac{1}{N} \sum_{k=1}^N \|\lAk\|^2.
  \end{align}
  Subtracting $\frac{1}{N} \sum_{k=1}^N \|\lAk\|^2$ and adding $\big \| \espc \big \|^2$ on both sides we get
  \begin{equation*}
    \big \| \spcE - \espc \big \|^2 \leq
    \big \| \lAz - \espc \big \|^2,
  \end{equation*}
  and therefore
  \begin{equation}
    \| \spcE \|
    \leq 
    \| \lAz \| + 2 \big \| \espc \big \|.
  \end{equation}
  Finally, using lemma~\ref{mean-spectrum-mean-edges} and (\ref{k0_minf}) in the equation above, we obtain
  \begin{equation}
    \| \spcE \|
    \le
    3 \sqrt{2 \ebar},
  \end{equation}
  which completes the proof of the bound on the spectrum of the \fr mean. \qed
\end{lproof}
\subsection{The median graphs computed using the adjacency spectral pseudometric
  \label{median-spectral}}
We finally consider the computation of the median graphs. We have the following bound on the norm of the spectrum of
$\mA$.
\begin{lemma}
  \label{spectral-median}
  Let $\spcM$ be the spectrum of a sample \fr median graph. Let $\ebar$ be the sample mean number of edges of the
  graphs $G^{(1)},\ldots, G^{(N)}$. Then,
  \begin{equation}
    \|  \spcM \| \leq  3 \sqrt{2 \ebar}.
  \end{equation}
\end{lemma}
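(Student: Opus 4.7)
The plan is to follow the same template as the proof of lemma~\ref{spectral-mean}, but with the squared-norm expansion replaced by two applications of the triangle inequality, since the median objective does not expand into an inner-product form.

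First, I would invoke lemma~\ref{centerofmass} to extract an index $k_0 \in \{1,\ldots,N\}$ satisfying $\|\lAz\| \le \|\espc\|$, and then lemma~\ref{mean-spectrum-mean-edges} would give $\|\lAz\| \le \sqrt{2\ebar}$. Since $\lAz$ is a realizable spectrum (element of $\cL$), the defining optimization (\ref{def-median-spectral}) of the \fr median spectrum yields
\begin{equation*}
\sum_{k=1}^N \|\spcM - \lAk\| \le \sum_{k=1}^N \|\lAz - \lAk\|.
\end{equation*}
Applying the reverse triangle inequality on the left ($\|\spcM - \lAk\| \ge \|\spcM\| - \|\lAk\|$) and the ordinary triangle inequality on the right ($\|\lAz - \lAk\| \le \|\lAz\| + \|\lAk\|$), then dividing by $N$ and rearranging, I obtain the key estimate
\begin{equation*}
\|\spcM\| \le \|\lAz\| + \frac{2}{N}\sum_{k=1}^N \|\lAk\|.
\end{equation*}

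To finish, I would use the identity (\ref{edges-eigenval}) to write $\|\lAk\| = \sqrt{2\,\e{\bAk}}$, and bound the remaining average by Jensen's inequality applied to the concave function $\sqrt{\cdot}$:
\begin{equation*}
\frac{1}{N}\sum_{k=1}^N \sqrt{2\,\e{\bAk}} \le \sqrt{\frac{2}{N}\sum_{k=1}^N \e{\bAk}} = \sqrt{2\ebar}.
\end{equation*}
Combining the two bounds then yields $\|\spcM\| \le \sqrt{2\ebar} + 2\sqrt{2\ebar} = 3\sqrt{2\ebar}$, which is the advertised inequality.

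The main obstacle is that, in contrast to the mean case, the median loss $\sum_k \|\blb - \lAk\|$ does not expand into an inner-product form that would let me directly compare $\spcM$ with the sample mean spectrum $\espc$. The two successive triangle inequalities are somewhat wasteful, but the concavity of the square root absorbs the extra factor of two in the second term, so the same constant $3$ that appeared in lemma~\ref{spectral-mean} emerges here as well.
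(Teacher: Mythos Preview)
Your argument is correct and follows the same skeleton as the paper's proof: pick $k_0$ from lemma~\ref{centerofmass}, invoke the optimality of $\spcM$ against $\lAz \in \cL$, bound the right-hand Fr\'echet sum by the triangle inequality together with the concavity of $\sqrt{\cdot}$, and assemble to get $3\sqrt{2\ebar}$.

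The one point of departure is your handling of the left-hand side. Contrary to your remark, the paper \emph{does} compare $\spcM$ with $\espc$: it notes that $\bx \mapsto \|\spcM - \bx\|$ is convex, so Jensen gives $\|\spcM - \espc\| \le \frac{1}{N}\sum_k \|\spcM - \lAk\|$, and then finishes via $\|\spcM\| \le \|\espc\| + \|\spcM - \espc\|$. Your reverse-triangle step bypasses $\espc$ entirely and is arguably more elementary; the two routes are equivalent in strength and arrive at the same constant by the same final arithmetic.
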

\begin{lproof}
The function $\Phi$,
  \begin{align*}
    \Phi: \R^n & \longrightarrow [0,\infty)\\
      \bx & \longmapsto \Phi(\bx) = \big \| \spcM - \bx \big\|
  \end{align*}
  is strictly convex, and therefore
  \begin{equation}
    \Phi \big (\espc \big) = 
    \Phi \left (\frac{1}{N} \sum_{k=1}^N \lAk \right)
    \le
    \frac{1}{N} \sum_{k=1}^N  \Phi \left(\lAk \right).
    \label{Phi_convex}
  \end{equation}
  Now, the right-hand side of (\ref{Phi_convex}) is the \fr function evaluated at one of its minimizers. Thus
  $F_1(\spcM)$, is smaller than $F_1(\lAz)$, where $\bA^{(k_0)}$ is defined in lemma~\ref{centerofmass}, and
  (\ref{Phi_convex}) becomes
  \begin{equation}
    \| \spcM  - \espc \| 
    \leq 
    \frac{1}{N}\sum_{k=1}^N \| \lAz - \lAk \|. \label{A41}
  \end{equation}
  Also, because of lemma~\ref{mean-spectrum-mean-edges} and (\ref{k0_minf}), we get
  \begin{equation}
    \frac{1}{N}\sum_{k=1}^N \| \lAz - \lAk \|
    \leq \|\lAz\| + \sqrt{2 \ebar}
    \leq  2 \sqrt{2 \ebar}.\label{A43}
  \end{equation}
  Combining (\ref{A41}) and (\ref{A43}), and using  lemma~\ref{mean-spectrum-mean-edges} we conclude that 
  \begin{equation*}
    \big \| \spcM \big \|
    \leq
    \big \| \espc \big \| + 2 \sqrt{2 \ebar} 
    \leq
    3 \sqrt{2 \ebar}.
  \end{equation*}
  This completes the proof of the bound on the spectrum of a \fr median. \qed
\end{lproof}
\subsection{The number of edges of $\mG$ and $\EG$ when $d = d_\lambda$}
The following lemma provides the bounds given by Theorem~\ref{theorem1} when $d$ is the spectral adjacency pseudometric.
\begin{lemma}
  \label{frechet-edge-spectral}
  Let $\big\{ G^{(1)},\ldots, G^{(N)} \big\}$ be a sample of unweighted simple labeled graphs with vertex set $\left\{1,
  \ldots ,n\right\}$. We consider a sample \fr mean, $\EA$, and a sample \fr median, $\mA$, computed according to the
  spectral adjacency pseudometric. Then
  \begin{equation}
    \max \left\{ \e{\EA} , \e{\mA} \right\} \leq  9  \; \ebar,
  \end{equation}
  where $\ebar$ is the sample mean number of edges given by (\ref{ebar}).
\end{lemma}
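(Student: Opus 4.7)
The plan is to combine the two preceding bounds on the norms of the spectra of the Fr\'echet mean and median graphs (Lemmata~\ref{spectral-mean} and~\ref{spectral-median}) with the classical identity (\ref{edges-eigenval}) that relates the number of edges of a graph to the squared $\ell^2$ norm of its adjacency spectrum. This is essentially a one-step calculation, since all of the real work has already been done.

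More concretely, I would first invoke (\ref{edges-eigenval}) to rewrite the number of edges of the sample Fr\'echet mean and median graphs as
\begin{equation*}
  \e{\EA} = \frac{1}{2} \| \spcE \|^2, \qquad
  \e{\mA} = \frac{1}{2} \| \spcM \|^2.
\end{equation*}
Then I would substitute the bounds $\| \spcE \| \leq 3 \sqrt{2 \ebar}$ from Lemma~\ref{spectral-mean} and $\| \spcM \| \leq 3 \sqrt{2 \ebar}$ from Lemma~\ref{spectral-median}. Squaring each bound yields $\| \spcE \|^2 \leq 18 \ebar$ and $\| \spcM \|^2 \leq 18 \ebar$, and dividing by $2$ gives
\begin{equation*}
  \e{\EA} \leq 9 \ebar \qquad \text{and} \qquad \e{\mA} \leq 9 \ebar,
\end{equation*}
from which the stated maximum bound $\max\{\e{\EA}, \e{\mA}\} \leq 9 \ebar$ follows immediately.

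There is no real obstacle here: all of the nontrivial work, including circumventing the non-convexity of the realizable spectral set $\cL$ and controlling $\spcE$ and $\spcM$ by comparison with the sample mean spectrum $\espc$ and the reference graph $\bA^{(k_0)}$ provided by Lemma~\ref{centerofmass}, has been absorbed into the preceding two lemmata. The only thing to verify is that the constants propagate correctly from the norm bounds $3\sqrt{2 \ebar}$ through the squaring in (\ref{edges-eigenval}) to give the factor $9$ advertised in the statement, which is a direct arithmetic check.
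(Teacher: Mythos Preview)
Your proposal is correct and follows exactly the same approach as the paper: invoke the identity (\ref{edges-eigenval}) to write $\e{\EA}=\tfrac12\|\spcE\|^2$ and $\e{\mA}=\tfrac12\|\spcM\|^2$, then square the bounds $3\sqrt{2\ebar}$ from Lemmata~\ref{spectral-mean} and~\ref{spectral-median} to obtain the factor~$9$. There is nothing to add.
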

\begin{lproof}
  We first analyse the case of a sample \fr mean graph; a sample \fr median graph is handled in the same way. From
  lemmata~\ref{spectral-mean} and~\ref{spectral-median}, we have
  \begin{equation}
    \|  \spcE \|^2  \leq  18 \; \ebar.
  \end{equation}
  Now, from (\ref{edges-eigenval}) we have
  $\e{\EA} = \frac{1}{2} \| \spcE \|^2$,
  and therefore
  \begin{equation*}
    \e{\EA}  \leq 9 \; \ebar,
  \end{equation*}
  which completes the proof of the lemma. \qed
\end{lproof}

\begin{thebibliography}{10}
\providecommand{\url}[1]{\texttt{#1}}
\providecommand{\urlprefix}{URL }
\providecommand{\doi}[1]{https://doi.org/#1}

\bibitem{banks98}
Banks, D., Constantine, G.: Metric models for random graphs. Journal of
  Classification  \textbf{15}(2),  199--223 (1998)

\bibitem{bapat10}
Bapat, R.B.: Graphs and matrices, vol.~27. Springer (2010)

\bibitem{bollobas07}
Bollob{\'a}s, B., Janson, S., Riordan, O.: The phase transition in inhomogeneous
  random graphs. Random Structures \& Algorithms  \textbf{31}(1),  3--122
  (2007)

\bibitem{devroye13}
Devroye, L., Gy{\"o}rfi, L., Lugosi, G.: A probabilistic theory of pattern
  recognition, vol.~31. Springer Science \& Business Media (2013)

\bibitem{Donnat2018}
Donnat, C., Holmes, S.: Tracking network dynamics: A survey using graph
  distances. The Annals of Applied Statistics  \textbf{12}(2),  971--1012
  (2018)

\bibitem{frechet47}
Fr\mbox{\'e}chet, M.: Les espaces abstraits et leur utilit{\'e} en statistique
  th{\'e}orique et m{\^e}me en statistique appliqu{\'e}e. Journal de la
  Soci{\'e}t{\'e} Fran{\c{c}}aise de Statistique  \textbf{88},  410--421 (1947)

\bibitem{ginestet17}
Ginestet, C.E., Li, J., Balachandran, P., Rosenberg, S., Kolaczyk, E.D.:
  Hypothesis testing for network data in functional neuroimaging. The Annals of
  Applied Statistics  \textbf{11}(2),  725--750 (2017)

\bibitem{han16}
Han, F., Han, X., Liu, H., Caffo, B., et~al.: Sparse median graphs estimation
  in a high-dimensional semiparametric model. The Annals of Applied Statistics
  \textbf{10}(3),  1397--1426 (2016)

\bibitem{jain16a}
Jain, B.J.: On the geometry of graph spaces. Discrete Applied Mathematics
  \textbf{214},  126--144 (2016)

\bibitem{jain12}
Jain, B.J., Obermayer, K.: Learning in \mbox{Riemannian} orbifolds. arXiv
  preprint arXiv:1204.4294  (2012)

\bibitem{johnson18}
Johnson, C.R., Mariju{\'a}n, C., Paparella, P., Pisonero, M.: The \mbox{NIEP}.
  In: Operator Theory, Operator Algebras, and Matrix Theory, pp. 199--220.
  Springer (2018)

\bibitem{knudsen2001}
Knudsen, C., McDonald, J.: A note on the convexity of the realizable set of
  eigenvalues for nonnegative symmetric matrices. The Electronic Journal of
  Linear Algebra  \textbf{8},  110--114 (2001)

\bibitem{kolaczyk20}
Kolaczyk, E.D., Lin, L., Rosenberg, S., Walters, J., Xu, J., et~al.: Averages
  of unlabeled networks: Geometric characterization and asymptotic behavior.
  The Annals of Statistics  \textbf{48}(1),  514--538 (2020)

\bibitem{lunagomez20}
Lunag{\'o}mez, S., Olhede, S.C., Wolfe, P.J.: Modeling network populations via
  graph distances. Journal of the American Statistical Association pp. 1--18
  (2020)

\bibitem{meyer22b}
Meyer, F.G.: The \mbox{Fr\'echet} mean of inhomogeneous random graphs. In:
  Complex Networks and Their Applications X. pp. 1--12. Springer (2021)

\bibitem{vanmieghen10}
Van~Mieghem, P.: Graph spectra for complex networks. Cambridge University Press
  (2010)

\bibitem{wills20}
Wills, P., Meyer, F.G.: Metrics for graph comparison: A practitioner’s guide.
  PLOS ONE  \textbf{15}(2),  1--54 (02 2020).
  \doi{10.1371/journal.pone.0228728}

\end{thebibliography}

\end{document}